\newcommand{\arXiv}[1]{\href{http://arxiv.org/abs/#1}{\texttt{arXiv:#1}}}
\newcommand{\Hal}[1]{\href{https://hal.archives-ouvertes.fr/#1}{\texttt{HAL:#1}}}
\newlength{\wideitemsep}		\setlength{\wideitemsep}{.3\itemsep}
\let\olditem\item
\renewcommand{\item}{\setlength{\itemsep}{\wideitemsep}\olditem}
\def\rd{\mathrm{d}}
\def\ri{\mathrm{i}}
\def\re{\mathrm{e}}
\newcommand{\C}{{\mathbb C}}
\theoremstyle{plain}
\newtheorem{thm}{Theorem}[section]
\newtheorem{lem}[thm]{Lemma}
\theoremstyle{definition}
\newtheorem{rem}{Remark}
\newcommand{\bi}{\begin{itemize}}
\newcommand{\ei}{\end{itemize}}
\newcommand{\bd}{\begin{description}}
\newcommand{\ed}{\end{description}}
\newcommand{\be}{\begin{enumerate}}
\newcommand{\ee}{\end{enumerate}}
\def\bc{\begin{center}}
\def\ec{\end{center}}
\def\bf{\textbf}
\newcommand{\tand}{\text{and}}
\newcommand{\lsum}{\sum\limits}
\newcommand{\lint}{\int\limits}
\def\z{\zeta}
\def\eps{\epsilon}
\def\no{\noindent}
\def\l{\left}
\def\r{\right}
\def\bl{\bigl}
\def\br{\bigr}
\def\m{\medskip}
\def\s{\smallskip}
\begin{document}

\title{Integral representations and asymptotic behaviours 
of Mittag-Leffler type functions of two variables}
\author{Christian Lavault\thanks{LIPN, CNRS UMR 7030. \emph{E-mail:}\
\href{mailto:lavault@lipn.univ-paris13.fr}{lavault@lipn.univ-paris13.fr}}
}

\date{\small \empty}
\maketitle

\begin{abstract}
The paper explores various special functions which generalize the two-parametric Mittag-Leffler type function of two variables. Integral representations for these functions in different domains of variation of arguments for certain values of the parameters are obtained. The asymptotic expansions formulas and asymptotic properties of such functions are also established for large values of the variables. This provides statements of theorems for these formulas and their corresponding properties.

\s \no {\bf Keywords:} Generalized two-parametric Mittag-Leffler type functions of two variables; Integral representations; Special functions; Hankel's integral contour; Asymptotic expansion formulas.

\s \no 2010 Mathematics Subject Classification: 33E12, 33C70, 11S23, 32A26, 33C50, 41A60.
\end{abstract}

\section{Definition and notation} \label{def}
Let the power series $E_{\alpha,\beta}(z) := \lsum_{n\ge 0} \frac{z^n}{\Gamma(\alpha n + \beta)}\ %
\ (\alpha, \beta\in \C;\ \Re(\alpha) > 0)$ define the classical two-parametric Mittag-Leffler (M-L for short) function (see e.g, \cite[\color{cyan}1953]{Agar53,HumAgr53}, \cite[\color{cyan}1905]{Wiman05}). In the case when $\alpha$ and $\beta$ are real positive, the series converges for all values of $z\in \C$, so $E_{\alpha,\beta}(z)$ is an entire functions of $z\in \C$~\cite[\color{cyan}Lect.~1]{Levin96} of order $\rho = 1/\Re(\alpha)$ and type $\sigma = 1$ (see~\cite[\color{cyan}\S1.1]{Lavault17}). From here on, this latter two-parametric M-L function of $z\in \C$ is denoted for simplicity by $E_\alpha(z; \beta)$, as defined in~\cite{Djrba60,Djrba66}.

The two-parametric M-L function of $z\in \C$ extends to the generalized M-L type function $E_{\alpha,\beta}(x,y;\mu)$ of two variables $x, y\in \C$. The latter is an entire function defined by the double power series~\cite[\color{cyan}Eq.~12]{OgorYasha10}
\begin{equation} \label{def_ml}
E_{\alpha,\beta}(x,y;\mu) := \lsum_{n,m\ge 0} \frac{x^n y^m}{\Gamma(n\alpha + m\beta + \mu)}\ \qquad (\alpha,\, \beta,\, \mu\in \C,\ \Re(\alpha),\, \Re(\beta) > 0),
\end{equation}
where the arbitrary parameter $\mu$ takes in general a complex value. 

Following~\cite{Djrba60,Djrba66} (see also e.g. \cite{GoKiMaRo14,HaMaSa11,Temme96}, \cite[\color{cyan}\S1.2 \& App.~A, C \& D]{Lavault17}, and references therein), the Hankel's integral contour is denoted by $\gamma(\eps;\theta) := \bl\{0 < \theta\le \pi,\ \eps > 0\br\}$ oriented by non-decreasing $\arg \z$. It consists in the following parts:
\be
\item the two rays $S_{\theta} =  \bl\{\arg \z = \theta,\ |\z|\ge \eps\br\}$ and $S_{-\theta} = %
\bl\{\arg \z = -\theta,\ |\z|\ge \eps\br\}$;
\item the circular arc $C_\theta(0;\eps) =  \bl\{|\z| = \eps,\ -\theta\le \arg \z\le \theta\br\}$.
\ee
If $0 < \theta < \pi$, then the Hankel contour $\gamma(\eps;\theta)$ divides the complex $\z$-plane into two unbounded regions, namely $\Omega^{(-)}(\eps;\theta)$ to the left of $\gamma(\eps ;\theta)$ by orientation and $\Omega^{(+)}(\eps;\theta)$ to the right of it. If $\theta = \pi$, then the contour consists of the circle $|\z| = \eps$ and the twice passable ray $-\infty < \z\le -\eps$.

\section{Integral representations} \label{rep}
This section provides a few lemmas, which show various integral representations of the generalized M-L type function~\eqref{def_ml} corresponding to different domains of variation  of its arguments.

\begin{lem} \label{lem1}
Let $0 < \alpha, \beta < 2$, $\alpha\beta < 2$. Let $\mu$ be any complex number and let $\theta$ meet the condition 
\begin{equation} \label{cond1}
\pi \alpha\beta/2 < \theta\le \min\bl(\pi, \pi\alpha \beta\br).
\end{equation}

If $x\in \Omega^{(-)}(\eps_\alpha;\theta_\alpha)$\ and $y\in \Omega^{(-)}(\eps_\beta;\theta_\beta)$, where $\eps_\alpha := \eps^{1/\beta}$, $\eps_\beta := \eps^{1/\alpha}$, $\theta_\alpha := \theta/\beta$\ and $\theta_\beta := \theta/\alpha$, then the integral representation based on Hankel's contour integral holds
\begin{equation} \label{mlrep1}
E_{\alpha,\beta}(x,y;\mu) = \frac{1}{2\pi\ri} \frac{1}{\alpha\beta} \lint_{\gamma(\eps;\theta)} %
\frac{\re^{\z^{1/(\alpha \beta)}} \z^{\frac{\alpha + \beta - \mu}{\alpha \beta} - 1}} %
{(\z^{1/\alpha} - y)(\z^{1/\beta} - x)}\, \rd \z.
\end{equation}
\end{lem}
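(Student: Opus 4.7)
The plan is to expand the two factors $1/(\z^{1/\alpha}-y)$ and $1/(\z^{1/\beta}-x)$ of the integrand as geometric series, interchange the double summation with the contour integral, and identify each resulting monomial integral through Hankel's representation $\tfrac{1}{\Gamma(s)}=\tfrac{1}{2\pi\ri}\int_{\mathrm{Ha}}\re^{w}w^{-s}\,\rd w$ of the reciprocal gamma function. The crucial geometric observation is that the map $\z\mapsto\z^{1/\beta}$ carries $\gamma(\eps;\theta)$ bijectively onto $\gamma(\eps^{1/\beta};\theta/\beta)=\gamma(\eps_\alpha;\theta_\alpha)$, while $\z\mapsto\z^{1/\alpha}$ sends it onto $\gamma(\eps_\beta;\theta_\beta)$. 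Hence the hypotheses $x\in\Omega^{(-)}(\eps_\alpha;\theta_\alpha)$ and $y\in\Omega^{(-)}(\eps_\beta;\theta_\beta)$ place $x$ and $y$ on the interior side of these transformed contours, and in particular keep the only poles of the integrand in $\z$ — namely $\z=x^{\beta}$ and $\z=y^{\alpha}$ — on the left side of the original $\gamma(\eps;\theta)$. By Cauchy's theorem one may then, if necessary, enlarge $\gamma(\eps;\theta)$ within this singularity-free region to secure uniform strict inequalities $|x|/|\z^{1/\beta}|\le q_1<1$ and $|y|/|\z^{1/\alpha}|\le q_2<1$ along the deformed contour, after which the two geometric series
\[
\frac{1}{\z^{1/\alpha}-y}=\sum_{m\ge 0}\frac{y^{m}}{\z^{(m+1)/\alpha}},\qquad \frac{1}{\z^{1/\beta}-x}=\sum_{n\ge 0}\frac{x^{n}}{\z^{(n+1)/\beta}}
\]
converge absolutely and uniformly on it.

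Next I would interchange the double summation and the contour integral. The legitimacy of this exchange rests on the exponential decay of $\re^{\z^{1/(\alpha\beta)}}$ on the two non-compact rays of $\gamma(\eps;\theta)$: there $\arg\z^{1/(\alpha\beta)}=\pm\theta/(\alpha\beta)$, so the lower bound $\theta>\pi\alpha\beta/2$ in~\eqref{cond1} forces $|\arg\z^{1/(\alpha\beta)}|>\pi/2$ and hence $\Re\,\z^{1/(\alpha\beta)}\to-\infty$ as $|\z|\to\infty$, producing the integrable majorant demanded by dominated convergence. With the summation now outside, each surviving integral is treated by the change of variables $\z=w^{\alpha\beta}$; the upper bound $\theta\le\min(\pi,\pi\alpha\beta)$ guarantees that the image contour $\gamma(\eps^{1/(\alpha\beta)};\theta/(\alpha\beta))$ has opening angle at most $\pi$ and is a genuine Hankel contour in the $w$-plane. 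A direct algebraic simplification of the exponents reduces the $(n,m)$-th term to $\alpha\beta\int\re^{w}w^{-(\alpha n+\beta m+\mu)}\,\rd w$, whose value by Hankel's formula is $\tfrac{2\pi\ri\,\alpha\beta}{\Gamma(\alpha n+\beta m+\mu)}$. Reassembling the resulting double series and canceling against the prefactor $1/(2\pi\ri\,\alpha\beta)$ reproduces~\eqref{def_ml}.

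The step I expect to demand the most care is the preliminary uniform-convergence argument and the contour deformation it may require. The region $\Omega^{(-)}(\eps_\alpha;\theta_\alpha)$ is not merely the disk $\{|x|<\eps_\alpha\}$: it also contains points of arbitrarily large modulus lying outside the sector $|\arg x|\le\theta_\alpha$, so a naive modulus estimate is inadequate. One must genuinely exploit the Hankel geometry, in both modulus and argument, to show that $x$ and $y$ stay strictly separated from the contours traced respectively by $\z^{1/\beta}$ and $\z^{1/\alpha}$ as $\z$ runs over $\gamma(\eps;\theta)$, so that a suitable deformation inside the singularity-free region delivers the uniform geometric bounds needed in the first paragraph. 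It is precisely here that the three hypotheses $0<\alpha,\beta<2$, $\alpha\beta<2$, and~\eqref{cond1} interlock: the first two make the substitution $\z=w^{\alpha\beta}$ compatible with a single-valued branch choice, while~\eqref{cond1} simultaneously preserves the Hankel character of both contours and supplies the exponential decay underwriting the interchange of summation and integration.
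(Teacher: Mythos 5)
Your argument is sound and arrives at the identity, but it runs in the opposite direction from the paper's and with a different technical engine. The paper starts from the double series \eqref{def_ml}, regroups it as $\sum_{n\ge 0} x^n E_\beta(y;\alpha n+\mu)$, imports Djrbashian's one\-/variable integral representation of $E_\beta$ on $\gamma(\eps_\beta;\theta_\beta)$ (already valid for \emph{every} $y\in\Omega^{(-)}(\eps_\beta;\theta_\beta)$, so the $y$-variable needs no further treatment), sums a \emph{single} geometric series $\sum_n \bl(x\z^{-\alpha/\beta}\br)^n$ under the integral for $|x|<\eps_\alpha$, changes variables $\z=\xi^{1/\alpha}$, and finally passes from the disk $\{|x|<\eps_\alpha\}$ to all of $\Omega^{(-)}(\eps_\alpha;\theta_\alpha)$ by analytic continuation. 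You instead expand \emph{both} factors of the integrand of \eqref{mlrep1}, justify the interchange by the exponential decay forced by $\theta/(\alpha\beta)>\pi/2$, and evaluate term by term via Hankel's formula after $\z=w^{\alpha\beta}$; your contour\-/enlargement device is legitimate (for $x\in\Omega^{(-)}(\eps_\alpha;\theta_\alpha)$ the pole $\z=x^\beta$ lies either in the disk $|\z|<\eps$ or in the sector $|\arg\z|>\theta$, hence is never swept as $\eps$ grows, and likewise for $y^\alpha$) and plays the role of the paper's analytic continuation. Your route is symmetric in $x$ and $y$ and self\-/contained, at the price of a double\-/series Fubini argument; the paper's is shorter because it leans on the known one\-/variable result.

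One caveat: the exponent arithmetic you assert does not close for the formula as printed. Expanding \eqref{mlrep1} literally, the $(n,m)$-term comes out as $1/\Gamma(\alpha n+\beta m+\mu+1)$, not $1/\Gamma(\alpha n+\beta m+\mu)$; the power of $\z$ in \eqref{mlrep1} should be $\frac{1+\alpha+\beta-\mu}{\alpha\beta}-1$, which is exactly what the paper's own substitution $\z=\xi^{1/\alpha}$ applied to \eqref{intrep1} produces and what reappears in \eqref{intrep4} and \eqref{mlrep6}. So your method in fact \emph{detects} a misprint in the displayed statement rather than harbouring an error of its own --- but you should carry the computation out explicitly rather than asserting that it reduces to $w^{-(\alpha n+\beta m+\mu)}$, since for the stated exponent it does not.
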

\begin{proof}
First, let $|x| < \eps_\alpha$. Taking into account the fact that $\eps_\alpha = \eps^{1/\beta} = %
\l(\eps_\beta^\alpha\r)^{1/\beta} = \eps_\beta^{\alpha/\beta}$ yields
\[\sup_{\z\in \gamma(\eps_\beta;\theta_\beta)} \bl|x\z^{-\alpha/\beta}\br| < 1.\]

By the statement of the definition in~\eqref{def_ml}, the expansion of $E_{\alpha,\beta}(x,y;\mu)$ may be rewritten as follows in terms of the corresponding two-parametric M-L function $E_\beta(y; \alpha n + m)$ of one variable,
\begin{flalign} \label{oneparml}
E_{\alpha,\beta}(x,y;\mu) &= \lsum_{n=0}^\infty \lsum_{m=0}^\infty \frac{x^n y^m} %
{\Gamma(\alpha n + \beta m + \mu)} \nonumber\\
&= \lsum_{n=0}^\infty x^n \lsum_{m=0}^\infty \frac{y^m}{\Gamma(\beta m + (\alpha n + \mu))} %
= \lsum_{n=0}^\infty x^n E_\beta(y; \alpha n + m).
\end{flalign}
Under the assumptions of Lemma~\ref{lem1}, it is possible to use the known integral representation of $E_\beta(y; \alpha n + m)$ (see e.g. \cite[\color{cyan}Eq.~(2.2)]{Djrba66}). Taking the above $\eps_\beta$ and $\theta_\beta$ as the parameters defining the Hankel contour, which is admissible according to inequalities~\eqref{cond1} provided that $\theta_\beta = \theta/\alpha$, gives for $y\in \Omega^{(-)}(\eps_\beta;\theta_\beta)$,
\begin{flalign} \label{intrep1}
E_{\alpha,\beta}(x,y;\mu) &= \lsum_{n=0}^\infty x^n E_\beta(y; \alpha n + m) %
= \lsum_{n=0}^\infty x^n\, \frac{1}{2\pi\ri} \frac{1}{\beta} \lint_{\gamma(\eps_\beta;\theta_\beta)} %
\frac{\re^{\z^{1/\beta}} \z^{\frac{1 - \alpha n - \mu}{\beta}}} {\z-y}\, \rd \z\nonumber\\
&= \frac{1}{2\pi\ri} \frac{1}{\beta} \lint_{\gamma(\eps_\beta;\theta_\beta)} %
\frac{\re^{\z^{1/\beta}} \z^{\frac{1-\mu}{\beta}}} {\z-y}\, \l(\lsum_{n=0}^\infty %
\l(x\z^{\alpha/\beta}\r)^{n}\r)\, \rd \z \nonumber\\
&= \frac{1}{2\pi\ri} \frac{1}{\beta} \lint_{\gamma(\eps_\beta;\theta_\beta)} %
\frac{\re^{\z^{1/\beta}} \z^{\frac{1 + \alpha - \mu}{\beta}}} %
{(\z - y)(\z^{\alpha/\beta} - x)}\, \rd \z.
\end{flalign}
Now, rewriting the above integral representation~\eqref{intrep1} along the suitable integral contour 
$\gamma(\eps;\theta)$, we get Eq.~\eqref{mlrep1},
\begin{flalign*}
E_{\alpha,\beta}(x,y;\mu) &= \frac{1}{2\pi\ri} \frac{1}{\beta} \lint_{\gamma(\eps;\theta)} %
\frac{ \re^{\l(\xi^{1/\alpha}\r)^{1/\beta}} \l(\xi^{1/\alpha}\r)^{\frac{1 + \alpha - \mu}{\beta} } } %
{(\xi^{1/\alpha} - y)(\xi^{1/\beta} - x)}\, \frac{1}{\alpha}\, %
\xi^{\frac{1 - \alpha}{\alpha}}\, \rd \xi\nonumber\\
&= \frac{1}{2\pi\ri} \frac{1}{\alpha\beta} \lint_{\gamma(\eps;\theta)} %
\frac{\re^{\xi^{1/(\alpha \beta)}} \xi^{\frac{\alpha + \beta - \mu}{\alpha \beta} - 1}} %
{(\xi^{1/\alpha} - y)(\xi^{1/\beta} - x)}\, \rd \xi.
\end{flalign*}
The above resulting integral is absolutely convergent and it is an analytic function of $x$ and $y$ for 
$x\in \Omega^{(-)}(\eps_\alpha;\theta_\alpha)$, $y\in \Omega^{(-)}(\eps_\beta;\theta_\beta)$.

The open disk $D = \{|x| < \eps_\alpha\}$ is contained into the complex region $\Omega^{(-)}(\eps_\alpha;\theta_\alpha)$ for all values of $\theta_\alpha$ in the open interval $\bl]\pi\alpha/2, \min(\pi,\pi \alpha)\br[$. Therefore, from the principle of analytic continuation Eq.~\eqref{mlrep1} is valid everywhere within the complex region $\Omega^{(-)}(\eps_\alpha;\theta_\alpha)$ and the lemma follows.
\end{proof}

\begin{lem} \label{lem2}
Let $0 < \alpha, \beta < 2$, $\alpha\beta < 2$ Let $\mu$ be any complex number and let $\theta$ verify inequalities~\eqref{cond1}: $\pi \alpha\beta/2 < \theta\le \min\bl(\pi, \pi \alpha\beta\br)$.

If $x\in \Omega^{(-)}(\eps_\alpha;\theta_\alpha)$\ and $y\in \Omega^{(+)}(\eps_\beta;\theta_\beta)$, where $\eps_\alpha := \eps^{1/\beta}$, $\eps_\beta := \eps^{1/\alpha}$, $\theta_\alpha := \theta/\beta$\ and $\theta_\beta := \theta/\alpha$, then the integral representation holds
\begin{equation} \label{mlrep2}
E_{\alpha,\beta}(x,y;\mu) = \frac{1}{\beta}\, \frac{ \re^{y^{1/\beta}} %
y^{\frac{1 + \alpha - \mu}{\beta}} } {y^{\alpha/\beta} - x} + \frac{1}{2\pi\ri} \frac{1}{\alpha\beta} %
\lint_{\gamma(\eps;\theta)} \frac{ \re^{\z^{1/(\alpha \beta)} } \z^{\frac{\alpha + \beta - \mu} %
{\alpha \beta} - 1} } {(\z^{1/\alpha} - y)(\z^{1/\beta} - x)}\, \rd \z.
\end{equation}
\end{lem}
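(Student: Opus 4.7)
The plan is to mimic the proof of Lemma~\ref{lem1} essentially verbatim, replacing the single ingredient that fails when $y$ migrates from $\Omega^{(-)}(\eps_\beta;\theta_\beta)$ into $\Omega^{(+)}(\eps_\beta;\theta_\beta)$, namely the Hankel integral representation of the one-variable Mittag-Leffler function $E_\beta(y;\alpha n+\mu)$. In $\Omega^{(+)}(\eps_\beta;\theta_\beta)$ the corresponding identity (cf.~\cite{Djrba66}) picks up the residue contribution at $\z=y$ and reads
\[
E_\beta(y;\alpha n+\mu) \;=\; \frac{1}{\beta}\,\re^{y^{1/\beta}}\,y^{(1-\alpha n-\mu)/\beta} \;+\; \frac{1}{2\pi\ri\,\beta}\lint_{\gamma(\eps_\beta;\theta_\beta)}\frac{\re^{\z^{1/\beta}}\z^{(1-\alpha n-\mu)/\beta}}{\z-y}\,\rd\z.
\]

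First I would fix $x$ in the small disk $|x|<\eps_\alpha$ and additionally require $|x|<|y|^{\alpha/\beta}$; these two restrictions are used, respectively, to justify the expansion~\eqref{oneparml} and the termwise summation of the geometric series $\sum_{n\ge 0}(xy^{-\alpha/\beta})^n$ that will arise from the residue piece. Inserting the displayed identity into~\eqref{oneparml} and interchanging the sum with the contour integral—legitimate because $\sup_{\z\in\gamma(\eps_\beta;\theta_\beta)}|x\z^{-\alpha/\beta}|<1$ exactly as in the previous proof—splits the right-hand side into two contributions. The residue contribution telescopes to
\[
\frac{1}{\beta}\,\re^{y^{1/\beta}}\,y^{(1-\mu)/\beta}\lsum_{n\ge 0}\l(xy^{-\alpha/\beta}\r)^n \;=\; \frac{1}{\beta}\,\frac{\re^{y^{1/\beta}}\,y^{(1+\alpha-\mu)/\beta}}{y^{\alpha/\beta}-x},
\]
which is precisely the first summand of~\eqref{mlrep2}. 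The integral contribution reproduces the intermediate formula~\eqref{intrep1} verbatim, and the change of variable $\xi=\z^\alpha$ performed at the end of Lemma~\ref{lem1}'s proof transports it to the Hankel contour $\gamma(\eps;\theta)$, yielding the second summand of~\eqref{mlrep2}.

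To finish, I would extend the identity from the restricted region used above to the full product $\Omega^{(-)}(\eps_\alpha;\theta_\alpha)\times\Omega^{(+)}(\eps_\beta;\theta_\beta)$ by analytic continuation in $x$. The main technical point is to check that the right-hand side of~\eqref{mlrep2} is genuinely analytic there, i.e.\ that the potential pole $x=y^{\alpha/\beta}$ of the residue summand never intrudes. But for $y\in\Omega^{(+)}(\eps_\beta;\theta_\beta)$ a direct computation gives $|y^{\alpha/\beta}|>\eps_\beta^{\alpha/\beta}=\eps_\alpha$ and $|\arg y^{\alpha/\beta}|<(\alpha/\beta)\theta_\beta=\theta_\alpha$, so $y^{\alpha/\beta}\in\Omega^{(+)}(\eps_\alpha;\theta_\alpha)$, which is disjoint from $\Omega^{(-)}(\eps_\alpha;\theta_\alpha)$. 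The auxiliary constraint $|x|<|y|^{\alpha/\beta}$ is thereby removed, and the principle of analytic continuation yields the claim on the whole prescribed domain. The only delicate step is this domain-compatibility check; everything else is a line-by-line repetition of the computation in Lemma~\ref{lem1}.
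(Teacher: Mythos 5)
Your argument is correct, but it reaches \eqref{mlrep2} by a genuinely different route than the paper. The paper stays at the level of the already-summed two-variable contour integral: since $y\in\Omega^{(+)}(\eps_\beta;\theta_\beta)$, it first applies the intermediate representation \eqref{intrep1} over an enlarged contour $\gamma(\eps_{\beta_1};\theta_\beta)$ with $\eps_{\beta_1}>|y|$ (so that $y$ is again to the left of the contour), and then deforms back to $\gamma(\eps_\beta;\theta_\beta)$; by Cauchy's theorem the difference of the two contour integrals is exactly the residue term $\frac{1}{\beta}\,\re^{y^{1/\beta}}y^{(1+\alpha-\mu)/\beta}\big/\bl(y^{\alpha/\beta}-x\br)$. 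You instead inject the residue at the level of each one-variable coefficient $E_\beta(y;\alpha n+\mu)$, invoking Djrbashian's $\Omega^{(+)}$-representation termwise, and then resum the exponential contributions as a geometric series in $xy^{-\alpha/\beta}$. Both proofs rest on the same one-variable machinery and on the analytic continuation in $x$ already used for Lemma~\ref{lem1}; yours has the merit of explaining structurally where the closed form of the first summand comes from (it is $\sum_{n\ge 0} x^n\cdot\tfrac{1}{\beta}\re^{y^{1/\beta}}y^{(1-\alpha n-\mu)/\beta}$), and of verifying explicitly that the pole $x=y^{\alpha/\beta}$ never enters $\Omega^{(-)}(\eps_\alpha;\theta_\alpha)$ --- a point the paper leaves implicit. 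Note also that your auxiliary constraint $|x|<|y|^{\alpha/\beta}$ is in fact automatic once $|x|<\eps_\alpha$, since $|y|>\eps_\beta$ gives $|y|^{\alpha/\beta}>\eps_\beta^{\alpha/\beta}=\eps_\alpha$, so that step could be streamlined. The paper's deformation argument is marginally shorter because it needs only one application of Cauchy's theorem after the series has already been closed, whereas yours trades that for a termwise residue plus a geometric resummation.
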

\begin{proof}
By assumption, the point $y$ is located to the right of the Hankel contour $\gamma(\eps;\theta)$, that is $y\in \Omega^{(+)}(\eps_\beta;\theta_\beta)$. Then, for any $\eps_{\beta_1} > y$, $y\in \Omega^{(-)}(\eps_{\beta_1};\theta_\beta)$\ and $x\in \Omega^{(-)}(\eps_{\alpha_1};\theta_\alpha)$. Thus, by~\eqref{intrep1} we get the integral representation
\begin{equation} \label{intrep2}
E_{\alpha,\beta}(x,y;\mu) = \frac{1}{2\pi\ri} \frac{1}{\beta} \lint_{\gamma(\eps_{\beta_1} ; %
\theta_\beta)} \frac{ \re^{\z^{1/\beta} } \z^{\frac{\alpha + \beta - \mu}{\beta}} } %
{(\z - y)(\z^{\alpha/\beta} - x)}\, \rd \z.
\end{equation}
On the other hand, if $\eps_\beta < y < \eps_{\beta_1}$, $|\arg y| < \theta_\beta$ and then, by Cauchy theorem,
\begin{equation} \label{intrep3}
E_{\alpha,\beta}(x,y;\mu) = \frac{1}{2\pi\ri} \frac{1}{\beta} \lint_{\gamma(\eps_{\beta_1} ; %
\theta_\beta) - \gamma(\eps_\beta;\theta_\beta)} \frac{ \re^{\z^{1/\beta}} %
\z^{\frac{1 + \alpha - \mu}{\beta}} } {(\z^{1/\alpha} - y)(\z^{1/\beta} - x)}\, \rd \z %
= \frac{1}{\beta}\, \frac{ \re^{y^{1/\beta}} y^{\frac{1 + \alpha - \mu}{\beta}} }{y^{\alpha/\beta} - x}.
\end{equation}
From Eqs.~\eqref{intrep2} and~\eqref{intrep3} we obtain the representation~\eqref{mlrep2}, and Lemma~\ref{lem2} follows.
\end{proof}

\begin{rem} \label{rem1}
For $x\in \Omega^{(+)}(\eps_\alpha;\theta_\alpha)$ and $y\in \Omega^{(-)}(\eps_\beta;\theta_\beta)$\ $E_{\alpha,\beta}(x,y;\mu)$, the integral representation is shown in a similar manner to be
\begin{equation} \label{mlrep3}
E_{\alpha,\beta}(x,y;\mu) = \frac{1}{\alpha}\, \frac{ \re^{x^{1/\alpha}} %
x^{\frac{1 + \beta - \mu}{\alpha}} } {x^{\beta/\alpha} - y} + \frac{1}{2\pi\ri} \frac{1}{\alpha\beta} %
\lint_{\gamma(\eps;\theta)} \frac{ \re^{\z^{1/(\alpha \beta)} } \z^{\frac{\alpha + \beta - \mu} %
{\alpha \beta} - 1} } {(\z^{1/\alpha} - y)(\z^{1/\beta} - x)}\, \rd \z.
\end{equation}
\end{rem}

\begin{lem} \label{lem3}
Let $0 < \alpha, \beta < 2$, $\alpha\beta < 2$. Let $\mu$ be any complex number and let $\theta$ verify inequalities~\eqref{cond1}. 
If $x\in \Omega^{(+)}(\eps_\alpha;\theta_\alpha)$\ and $y\in \Omega^{(+)}(\eps_\beta;\theta_\beta)$, where $\eps_\alpha := \eps^{1/\beta}$, $\eps_\beta := \eps^{1/\alpha}$, $\theta_\alpha := \theta/\beta$\ and $\theta_\beta := \theta/\alpha$, then the integral representation holds
\begin{flalign} \label{mlrep4}
E_{\alpha,\beta}(x,y;\mu) = \frac{1}{\alpha}\, \frac{ \re^{x^{1/\alpha}} %
x^{\frac{1 + \beta - \mu}{\alpha}} }{x^{\beta/\alpha} - y} &+ \frac{1}{\beta}\, \frac{\re^{y^{1/\beta}} %
y^{\frac{1 + \alpha - \mu}{\beta}} } {y^{\alpha/\beta} - x}\nonumber\\
& + \frac{1}{2\pi\ri} \frac{1}{\alpha\beta} \lint_{\gamma(\eps;\theta)} %
\frac{ \re^{\z^{1/(\alpha \beta)} } \z^{\frac{\alpha + \beta - \mu}{\alpha \beta} - 1} } %
{(\z^{1/\alpha} - y)(\z^{1/\beta} - x)}\, \rd \z.
\end{flalign}
\end{lem}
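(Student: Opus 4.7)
The strategy will closely parallel the proofs of Lemma~\ref{lem2} and Remark~\ref{rem1}, but will now require recovering \emph{both} boundary contributions simultaneously. The integrand in~\eqref{mlrep1} carries two simple poles: $\z=y^\alpha$, coming from the factor $\z^{1/\alpha}-y$, and $\z=x^\beta$, coming from $\z^{1/\beta}-x$. The hypotheses $x\in\Omega^{(+)}(\eps_\alpha;\theta_\alpha)$ and $y\in\Omega^{(+)}(\eps_\beta;\theta_\beta)$ translate, through the one-to-one correspondence $w\mapsto w^\beta$ (resp.\ $w\mapsto w^\alpha$) between the relevant Hankel regions, into the statement that both poles $x^\beta$ and $y^\alpha$ lie in $\Omega^{(+)}(\eps;\theta)$, i.e.\ to the right of $\gamma(\eps;\theta)$.

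The first step is to enlarge the contour: choose $\eps_1>\eps$ large enough that both $x^\beta$ and $y^\alpha$ are swept into $\Omega^{(-)}(\eps_1;\theta)$; equivalently, $x\in\Omega^{(-)}(\eps_1^{1/\beta};\theta_\alpha)$ and $y\in\Omega^{(-)}(\eps_1^{1/\alpha};\theta_\beta)$. Lemma~\ref{lem1}, applied with $\eps_1$ in place of $\eps$, then yields the integral representation~\eqref{mlrep1} along the enlarged contour $\gamma(\eps_1;\theta)$. The second step is to compare this with the same representation over $\gamma(\eps;\theta)$: the closed curve $\gamma(\eps_1;\theta)-\gamma(\eps;\theta)$ bounds the ``croissant'' region $\{\eps<|\z|<\eps_1,\ |\arg\z|<\theta\}$, which contains precisely the two poles (since $|\arg x^\beta|=\beta|\arg x|<\theta$ by the constraint $|\arg x|<\theta_\alpha=\theta/\beta$, and symmetrically for $y^\alpha$). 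By Cauchy's residue theorem,
\begin{equation*}
\frac{1}{2\pi\ri}\frac{1}{\alpha\beta}\lint_{\gamma(\eps_1;\theta)-\gamma(\eps;\theta)}\frac{\re^{\z^{1/(\alpha\beta)}}\,\z^{\frac{\alpha+\beta-\mu}{\alpha\beta}-1}}{(\z^{1/\alpha}-y)(\z^{1/\beta}-x)}\,\rd\z = \frac{1}{\alpha\beta}\bl[\mathrm{Res}_{\z=y^\alpha}+\mathrm{Res}_{\z=x^\beta}\br].
\end{equation*}
A local Taylor expansion---$\z^{1/\alpha}-y\sim\frac{1}{\alpha}y^{1-\alpha}(\z-y^\alpha)$ near $y^\alpha$, and symmetrically $\z^{1/\beta}-x\sim\frac{1}{\beta}x^{1-\beta}(\z-x^\beta)$ near $x^\beta$---reduces each residue to precisely the boundary terms already obtained in~\eqref{intrep3} of Lemma~\ref{lem2} (for the $y$-term) and in Remark~\ref{rem1} (for the $x$-term).

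The main technical point, more a matter of bookkeeping than of genuine difficulty, is to verify that the enlarged contour stays admissible under~\eqref{cond1}---which is immediate, since~\eqref{cond1} constrains only $\theta$ and not the radius---and that the multivalued factors $\z^{1/\alpha}$, $\z^{1/\beta}$, $\z^{1/(\alpha\beta)}$ remain single-valued throughout the croissant; taking the principal branch cuts along the rays $\arg\z=\pm\theta$, i.e.\ just outside the enclosed region, handles this exactly as in the proof of Lemma~\ref{lem2}. Adding the two residue contributions to the integral along $\gamma(\eps;\theta)$ then yields~\eqref{mlrep4}, completing the proof.
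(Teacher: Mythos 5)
Your proposal is correct and follows essentially the same route as the paper: enlarge the Hankel contour so that the relevant pole(s) move to its left, invoke the already-established representation on the enlarged contour, and recover the exponential boundary terms as residues via Cauchy's theorem on the annular region between the two contours. The only (cosmetic) difference is that you extract both residues at $\z=x^\beta$ and $\z=y^\alpha$ in a single deformation starting from Lemma~\ref{lem1}, whereas the paper first applies Lemma~\ref{lem2} on the enlarged contour (which already carries the $y$-term) and then deforms once more to pick up the $x$-residue; you also make explicit the caveat $x^\beta\neq y^\alpha$ that the paper mentions only in passing.
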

\begin{proof}
By assumption, each of the points $x$ and $y$ lies on the right-hand side of the Hankel contours $\gamma(\eps_\alpha;\theta_\alpha)$ and $\gamma(\eps_\beta;\theta_\beta)$, respectively; that is in the two regions of the complex plane defined by $x\in \Omega^{(+)}(\eps_\alpha;\theta_\alpha)$ and $y\in \Omega^{(+)}(\eps_\beta;\theta_\beta)$ (resp.).
The parameters $\eps_\alpha$ and $\eps_\beta$ correspond to $\eps$. Now choose $\eps_1$ ($\eps_1 > \eps$) such that one of the coordinates is to the right of the contour and the other coordinate to its left (which is always possible provided that $x^\beta\neq y^\alpha$). 

By definition, let $x\in \Omega^{(-)}(\eps_{\alpha_1};\theta_\alpha)$\ and $y\in \Omega^{(+)}(\eps_{\beta_1};\theta_\beta)$ (i.e., $x > y$). Then, by Eq.~\eqref{mlrep2} in Lemma~\ref{lem2}, we have the integral representation
\begin{equation} \label{intrep4}
E_{\alpha,\beta}(x,y;\mu) = \frac{1}{\beta}\, \frac{\re^{y^{1/\beta}} %
y^{\frac{1 + \alpha - \mu}{\beta}}} {y^{\alpha/\beta} - x} + \frac{1}{2\pi\ri} \frac{1}{\alpha \beta} %
\lint_{\gamma(\eps_1;\theta_)} \frac{ \re^{\z^{1/(\alpha\beta)}} %
\z^{\frac{1 + \alpha + \beta - \mu}{\alpha \beta} - 1} }{(\z^{1/\alpha} - y)(\z^{1/\beta} - x)}\, \rd \z.
\end{equation}
\end{proof}
The above integral in~\eqref{intrep4} may be rewritten in the form
\[
\frac{1}{2\pi\ri} \frac{1}{\alpha} \lint_{\gamma(\eps_1;\theta)} \frac{ \re^{\z^{1/\alpha} } %
\z^{\frac{\alpha + \beta - \mu}{\alpha}} } {(\z - y)(\z^{\beta/\alpha} - y)}\, \rd \z.\]
Now, when $\eps_\alpha < x < \eps_{\alpha_1}$, $|\arg x| < \theta_\alpha$ and then, by Cauchy theorem,
\begin{equation} \label{intrep5}
E_{\alpha,\beta}(x,y;\mu) = \frac{1}{2\pi\ri} \frac{1}{\alpha} \lint_{\gamma(\eps_{\alpha_1} ; %
\theta_\alpha) - \gamma(\eps_\alpha;\theta_\alpha)} \frac{ \re^{\z^{1/\alpha}} %
\z^{\frac{1 + \beta - \mu}{\alpha}} } {(\z^{\beta/\alpha} - y)(\z - x)}\, \rd \z = \frac{1}{\alpha}\, %
\frac{ \re^{x^{1/\alpha}} x^{\frac{1 + \beta - \mu}{\alpha}} } {x^{\beta/\alpha} - y}.
\end{equation}
Finally, from Eqs.~\eqref{intrep4} and~\eqref{intrep5} the representation~\eqref{mlrep4} holds true, and the lemma follows.

\begin{lem} \label{lem4}
If $\Re(\mu) > 0$, then the integral representations~\eqref{mlrep1}, \eqref{mlrep2}, \eqref{mlrep3} and \eqref{mlrep4} remain valid for $\alpha = 2$\ or $\beta = 2$. 
\end{lem}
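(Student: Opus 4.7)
The plan is to treat $\alpha = 2$ (the case $\beta = 2$ being symmetric) as the limit of a sequence $\alpha_k \nearrow 2$ with $\alpha_k \beta < 2$, apply Lemmas~\ref{lem1}--\ref{lem3} at each $\alpha_k$, and then pass to the limit in \eqref{mlrep1}--\eqref{mlrep4}. Fix $\theta$ in the interior of the admissible range \eqref{cond1} at $\alpha = 2$; since both bounds in \eqref{cond1} depend continuously on $\alpha$, the same $\theta$ satisfies \eqref{cond1} with $\alpha_k$ in place of $\alpha$ for every $k$ large enough, so a single Hankel contour $\gamma(\eps;\theta)$ can be used throughout, and the points $x, y$ stay in their prescribed regions $\Omega^{(\pm)}(\eps_{\alpha_k};\theta_{\alpha_k})$ uniformly in large $k$.

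Convergence of the left-hand side is routine: the summand $x^n y^m/\Gamma(\alpha_k n + \beta m + \mu)$ is continuous in $\alpha_k$, and the classical lower bound on $|\Gamma|$ provides the $k$-uniform summable majorant $|x|^n |y|^m/|\Gamma(\alpha_1 n + \beta m + \mu)|$, so dominated convergence on $\N\times\N$ yields $E_{\alpha_k,\beta}(x,y;\mu) \to E_{2,\beta}(x,y;\mu)$. The residue-type contributions appearing in \eqref{mlrep2}--\eqref{mlrep4} are manifestly continuous in $\alpha$ at $\alpha = 2$ and pass to the limit trivially.

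The main obstacle, and the only place where the hypothesis $\Re(\mu) > 0$ is used, is passing to the limit inside the Hankel integral. On the rays $\arg\z = \pm\theta$ one has $\bl|\re^{\z^{1/(\alpha_k\beta)}}\br| = \exp\bl(|\z|^{1/(\alpha_k\beta)} \cos(\theta/(\alpha_k\beta))\br) \leq 1$, because $\theta > \pi\alpha_k\beta/2$ forces $\cos(\theta/(\alpha_k\beta)) < 0$. Multiplying this by the algebraic factors, the integrand is bounded for large $|\z|$ by a constant multiple of $|\z|^{-\Re(\mu)/(\alpha_k\beta) - 1}$, which is integrable at infinity precisely because $\Re(\mu) > 0$; moreover, $\Re(\mu)/(\alpha_k\beta) \geq \Re(\mu)/(2\beta)$, so the majorant is uniform in $k$. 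On the circular arc $|\z| = \eps$ uniform boundedness in $k$ is elementary. Dominated convergence then carries the Hankel integral at level $\alpha_k$ to its analogue at $\alpha = 2$, and combining the three convergences delivers Eqs.~\eqref{mlrep1}--\eqref{mlrep4} at $\alpha = 2$, as required.
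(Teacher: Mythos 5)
Your proof is correct and follows essentially the same route as the paper: the paper's entire argument for Lemma~\ref{lem4} is the single sentence that one passes to the limit in the representations~\eqref{mlrep1}--\eqref{mlrep4} with respect to the corresponding parameters, and you have simply supplied the approximating sequence $\alpha_k\nearrow 2$, the uniform choice of contour, and the dominated-convergence estimates (where $\Re(\mu)>0$ secures the $k$-uniform integrable majorant on the rays) that the paper leaves implicit.
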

\begin{proof}
Passing  to the limit in the integral representations~\eqref{mlrep1}, \eqref{mlrep2}, \eqref{mlrep3} and \eqref{mlrep4} with respect to the corresponding parameters yields the lemma.
\end{proof}

\section{Asymptotic behaviours} \label{as}
The asymptotic properties of the function $E_{\alpha,\beta}(x,y;\mu)$ for large values of $|x|$ and $|y|$ are of particular interest.

\begin{thm} \label{asthm}
Let $0 < \alpha, \beta < 2$, $\alpha\beta < 2$. Let $\mu$ be any complex number and let $\tau_1$ be any real number satisfying inequalities~\eqref{cond1}
\[
\pi \alpha\beta/2 < \tau_1\le \min\bl(\pi, \pi \alpha\beta\br).\]
Then, for all integer $p\ge 1$, whenever $|x|\to \infty$ and $|y|\to \infty$, the following asymptotic formulas for the function $E_{\alpha,\beta}(x,y;\mu)$ hold from its respective integral representations.
\bi
\item[{\rm 1)}]\ If $|\arg x|\le \tau_1/\beta$\ and $|\arg y|\le \tau_1/\beta$, then
\begin{flalign} \label{as1}
E_{\alpha,\beta}(x,y;\mu) = \frac{1}{\alpha} &\, \frac{ \re^{x^{1/\alpha}} %
x^{\frac{1 + \beta - \mu}{\alpha}} }{x^{\beta/\alpha} - y} + \frac{1}{\beta}\, \frac{\re^{y^{1/\beta}} %
y^{\frac{1 + \alpha - \mu}{\beta}} } {y^{\alpha/\beta} - x}\nonumber\\
&+ \lsum_{n=1}^{p_\beta} \lsum_{m=1}^{p_\alpha} \frac{x^{-n} y^{-m}}{\Gamma(\mu - n\alpha - \m\beta)} %
+ o\l(|xy|^{-1} |x|^{-p_\beta}\r) + o\l(|xy|^{-1} |y|^{-p_\alpha}\r);
\end{flalign}

\item[{\rm 2)}]\ If $|\arg x|\le \tau_1/\beta$\ and $\tau_1/\alpha < |\arg y|\le \pi$, then
\begin{flalign} \label{as2}
E_{\alpha,\beta}(x,y;\mu) &= \frac{1}{\alpha}\, \frac{ \re^{x^{1/\alpha}} %
x^{\frac{1 + \beta - \mu}{\alpha}} }{x^{\beta/\alpha} - y}\nonumber\\
+& \lsum_{n=1}^{p_\beta} \lsum_{m=1}^{p_\alpha} \frac{x^{-n} y^{-m}}{\Gamma(\mu - n\alpha - \m\beta)} %
+ o\l(|xy|^{-1} |x|^{-p_\beta}\r) + o\l(|xy|^{-1} |y|^{-p_\alpha}\r);
\end{flalign}

\item[{\rm 3)}]\ If $\tau_1/\beta < |\arg x|\le \pi$\ and $|\arg y|\le \tau_1/\alpha$, then
\begin{flalign} \label{as3}
E_{\alpha,\beta}(x,y;\mu) &= \frac{1}{\beta}\, \frac{ \re^{y^{1/\beta}} %
y^{\frac{1 + \alpha - \mu}{\beta}} }{y^{\alpha/\beta} - x}\nonumber\\
+& \lsum_{n=1}^{p_\beta} \lsum_{m=1}^{p_\alpha} \frac{x^{-n} y^{-m}}{\Gamma(\mu - n\alpha - \m\beta)} %
+ o\l(|xy|^{-1} |x|^{-p_\beta}\r) + o\l(|xy|^{-1} |y|^{-p_\alpha}\r);
\end{flalign}

\item[{\rm 4)}]\ If $\tau_1/\beta < |\arg x|\le \pi$\ and $\tau_1/\alpha <|\arg y|\le \pi$, then
\begin{flalign} \label{as4}
E_{\alpha,\beta}(x,y;\mu) =& \lsum_{n=1}^{p_\beta} \lsum_{m=1}^{p_\alpha} \frac{x^{-n} y^{-m}} %
{\Gamma(\mu - n\alpha - \m\beta)}\nonumber\\
&+ o\l(|xy|^{-1} |x|^{-p_\beta}\r) + o\l(|xy|^{-1} |y|^{-p_\alpha}\r).
\end{flalign}
\ei
\end{thm}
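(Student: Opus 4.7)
The proof in each of the four cases shares the same shape: select the integral representation from Section~\ref{rep} valid in the region at hand, retain the explicit pole/exponential contributions that representation already carries, then expand the remaining rational kernel on the Hankel contour as a finite geometric series in $1/x$ and $1/y$, and identify each elementary Hankel integral with a reciprocal Gamma value. Concretely, case 1) starts from (\ref{mlrep4}) in Lemma~\ref{lem3}; the two pole terms displayed on the first line of (\ref{as1}) are the ones already present there. Case 2) starts from (\ref{mlrep2}) in Lemma~\ref{lem2}, retaining only the $y$-pole; case 3) starts from (\ref{mlrep3}) in Remark~\ref{rem1}, retaining only the $x$-pole; and case 4) starts from the pure contour integral (\ref{mlrep1}), where $x,y\in \Omega^{(-)}$ and no pole is picked up.

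The core algebraic step is the finite geometric expansion of the rational factor. For all $\z$ on $\gamma(\eps;\theta)$, write the exact identity
\[
\frac{1}{\z^{1/\beta}-x} = -\sum_{n=1}^{p_\beta}\frac{\z^{(n-1)/\beta}}{x^n} + \frac{\z^{p_\beta/\beta}}{x^{p_\beta}(\z^{1/\beta}-x)},
\]
together with the analogous identity in powers of $1/y$ for $\z^{1/\alpha}-y$, and multiply the two to obtain the main $p_\beta\times p_\alpha$ double sum plus three remainder pieces. For each $(n,m)$ in the main sum, the substitution $w = \z^{1/(\alpha\beta)}$, which sends $\gamma(\eps;\theta)$ to a Hankel contour in $w$ with $\rd\z = \alpha\beta\, w^{\alpha\beta-1}\rd w$, reduces the corresponding integral to $\frac{1}{2\pi \ri}\int_H \re^w\, w^{n\alpha+m\beta-\mu-1}\rd w$. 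By Djrbashian's Hankel representation $\frac{1}{\Gamma(\sigma)}=\frac{1}{2\pi \ri}\int_H \re^w\, w^{-\sigma}\rd w$, this evaluates to $1/\Gamma(\mu-n\alpha-m\beta)$, exactly the summand in the double sums of (\ref{as1})--(\ref{as4}).

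Each of the three remainder integrals must be shown to be $o(|xy|^{-1}|x|^{-p_\beta})$ or $o(|xy|^{-1}|y|^{-p_\alpha})$ (the doubly-remainder piece being absorbed by whichever is larger). Split $\gamma(\eps;\theta)$ into the arc $C_\theta(0;\eps)$ and the rays $S_{\pm\theta}$. On the arc, $|\z|=\eps$ is bounded, so $|\z^{1/\beta}-x|^{-1}=O(|x|^{-1})$ and $|\z^{1/\alpha}-y|^{-1}=O(|y|^{-1})$ for large $|x|,|y|$, and a direct estimate gives the claim. On the rays, the hypothesis $\theta>\pi\alpha\beta/2$ forces $\Re(\z^{1/(\alpha\beta)})<0$, so $|\re^{\z^{1/(\alpha\beta)}}|$ decays exponentially in $|\z|$; this tames the polynomial factors $|\z|^{p_\beta/\beta}$ and $|\z|^{p_\alpha/\alpha}$ coming from the remainders and yields the stated little-$o$ bounds.

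The principal obstacle is making these remainder bounds \emph{uniform} with respect to $\arg x$ and $\arg y$ throughout the given angular sectors. One must take the contour angle $\theta$ slightly smaller than $\tau_1$ so that the integration rays remain uniformly separated from every admissible direction of $x^\beta$ and $y^\alpha$ (keeping $|\z^{1/\beta}-x|$ and $|\z^{1/\alpha}-y|$ uniformly bounded below on the contour for the range of $\z$ that contributes non-negligibly), and only then let $\theta\nearrow\tau_1$ by analytic continuation in the parameters. This is the direct two-variable analogue of the uniformity argument underlying the classical Wright--Djrbashian asymptotic expansion for the one-variable Mittag-Leffler function.
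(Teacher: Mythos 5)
Your proof follows the same route as the paper's: start from the integral representation appropriate to each sector (\eqref{mlrep4} for case 1, \eqref{mlrep2}, \eqref{mlrep3}, \eqref{mlrep1} for cases 2--4), expand the kernel $\bl((\z^{1/\beta}-x)(\z^{1/\alpha}-y)\br)^{-1}$ as a finite double geometric sum plus remainders (the paper's identity \eqref{exp}), evaluate each main-term contour integral by the Hankel representation of $1/\Gamma$ to get $1/\Gamma(\mu-n\alpha-m\beta)$, and bound the three remainder integrals $I_1,I_2,I_3$ using the exponential decay of $\re^{\z^{1/(\alpha\beta)}}$ on the rays, which comes from $\theta/(\alpha\beta)>\pi/2$. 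All of that matches \eqref{mlrep5}--\eqref{estimateI3}.

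The one step that would fail as written is your uniformity fix: you take the contour angle $\theta$ slightly \emph{smaller} than $\tau_1$ and then let $\theta\nearrow\tau_1$. This is the wrong direction. In case 1) the representation \eqref{mlrep4} requires $x\in\Omega^{(+)}(\eps_\alpha;\theta/\beta)$ and $y\in\Omega^{(+)}(\eps_\beta;\theta/\alpha)$; if $\theta<\tau_1$, then for $\theta/\beta<|\arg x|\le\tau_1/\beta$ the point $x$ is no longer to the right of the contour, so \eqref{mlrep4} is not the valid representation there and the two exponential terms of \eqref{as1} are not both produced --- worse, when $\arg x=\pm\theta/\beta$ the factor $\z^{1/\beta}-x$ vanishes on the contour and your lower bound on the denominator disappears entirely. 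The paper instead fixes a second angle $\tau_2$ with $\pi\alpha\beta/2<\tau_1<\tau_2\le\min(\pi,\pi\alpha\beta)$ (inequality \eqref{cond2}) and integrates over $\gamma(1;\tau_2)$: the whole sector $|\arg x|\le\tau_1/\beta$, $|\arg y|\le\tau_1/\alpha$ then lies strictly to the right of the contour, one gets the uniform bound $\min_{\z\in\gamma(1;\tau_2)}|\z^{1/\beta}-x|=|x|\sin\bl((\tau_2-\tau_1)/\beta\br)$ (and likewise for $y$), and no limiting process in $\theta$ is needed at all. Replacing your ``$\theta<\tau_1$, then $\theta\nearrow\tau_1$'' device by this fixed choice of $\tau_2>\tau_1$ makes your argument coincide with the paper's.
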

\begin{proof}
The proof below focuses on the first case since, in the three other cases, the proofs are easily completed along the same lines as the one in case 1), that is the proof of asymptotic formula~\eqref{as1}.

So, under the constraints in case 1) (i.e., $|\arg x|\le \tau_1/\beta$\ and $|\arg y|\le \tau_1/\alpha$), pick a real number $\tau_2$ satisfying the inequalities~\eqref{cond2}
\begin{equation} \label{cond2}
\pi \alpha\beta/2 < \tau_1 < \tau_2\le \min\bl(\pi, \pi \alpha\beta\br).
\end{equation}
It is easy to show the expansion
\begin{equation} \label{exp}
\frac{1}{(\z^{1/\beta} - x)(\z^{1/\alpha} - y)} = \lsum_{n=1}^{p_\beta} \lsum_{m=1}^{p_\alpha} %
\frac{ \z^{\frac{n - 1}{\alpha} + \frac{m - 1}{\beta}} }{x^n y^m} + %
\frac{ x^{p_\beta} \z^{ \frac{p_\alpha}{\alpha} } + y^{p_\alpha} \z^{ \frac{p_\beta}{\beta} } %
- \z^{ \frac{p_\alpha}{\alpha} + \frac{p_\beta}{\beta}} }{x^{p_\beta} y^{p_\alpha} %
(\z^{1/\beta} - x)(\z^{1/\alpha} - y) }\,.
\end{equation}
Here we use the formula~\eqref{mlrep4} from Lemma~\ref{lem3}. Set $\eps = 1$ in~\eqref{exp}, then to the right of the contour $\gamma(1;\tau_2)$ (that is, within the complex region $\Omega^{(+)}(1;\tau_2)$), in view of expansion~\eqref{exp} and by Eq.~\eqref{mlrep4} the integral representation of $E_{\alpha,\beta}(x,y ;\mu)$ can be expressed in the form

\begin{flalign} \label{mlrep5}
E_{\alpha,\beta}(x,y;\mu) &= \frac{1}{\alpha}\, \frac{ \re^{x^{1/\alpha}} %
x^{\frac{1 + \beta - \mu}{\alpha}} }{x^{\beta/\alpha} - y} + \frac{1}{\beta}\, \frac{\re^{y^{1/\beta}} %
y^{\frac{1 + \alpha - \mu}{\beta}} } {y^{\alpha/\beta} - x}\nonumber\\
&+ \lsum_{n=1}^{p_\beta} \lsum_{m=1}^{p_\alpha} \frac{1}{2\pi\ri} \frac{1}{\alpha\beta}\, %
\l(\lint_{\gamma(1;\tau_2)} \re^{\z^{1/(\alpha \beta)}} \z^{ \frac{\alpha + \beta - \mu}{\alpha \beta} %
- 1 + \frac{n - 1}{\beta} + \frac{m - 1}{\alpha} }\, \rd \z)\r)\, x^{-n} y^{-m}\nonumber\\
&+ \frac{1}{2\pi\ri} \frac{1}{\alpha \beta} \lint_{\gamma(1;\tau_2)} \re^{ \z^{1/(\alpha \beta)} } %
\z^{ \frac{\alpha + \beta - \mu}{\alpha \beta} - 1 }\; \frac{ x^{p_\beta} \z^{\frac{p_\alpha}{\alpha}} %
+ y^{p_\alpha} \z^{\frac{p_\beta}{\beta}} - \z^{\frac{p_\alpha}{\alpha} + \frac{p_\beta}{\beta}} } %
{x^{p_\beta} y^{p_\alpha} (\z^{1/\beta} - x )(\z^{1/\alpha} - y) }\, \rd \z.
\end{flalign}
By Hankel's formula, the integral representation of the reciprocal gamma function~(see e.g. \cite[\color{cyan}Eq.~C3]{Lavault17}, \cite[\color{cyan}Chap.~3, \S3.2.6]{Temme96}, etc.) writes
\begin{flalign*}
& \kern3cm \frac{1}{\Gamma(s)} = \lint_{\gamma(\eps;\tau)} \re^u u^{-s}\, \rd u,\\
\intertext{%
and, as a consequence, the summand of the second term in~\eqref{mlrep5} satisfies the identity
}
\frac{1}{2\pi\ri} \frac{1}{\alpha\beta}\, & \lint_{\gamma(1;\tau_2)} \re^{\z^{1/(\alpha \beta)}} %
\z^{ \frac{\alpha + \beta - \mu}{\alpha \beta} - 1 + \frac{n - 1}{\beta} + \frac{m - 1}{\alpha} }\, %
\rd \z = \frac{1}{2\pi\ri} \frac{1}{\alpha\beta}\, \lint_{\gamma(1;\tau_2)} \re^{\z^{1/(\alpha \beta)}} %
\z^{ \frac{1- \mu}{\alpha \beta} - 1 + \frac{n}{\beta} + \frac{m}{\alpha} }\, \rd \z\nonumber\\
&= \frac{1}{2\pi\ri} \frac{1}{\alpha\beta}\, \lint_{\gamma(1;\tau_2)} \re^{\z^{1/(\alpha \beta)}} %
\z^{ -\frac{1}{\alpha \beta}\bl(\mu - \alpha n - \beta m\br) + \frac{1}{\alpha \beta} - 1 }\, \rd \z %
= \frac{1}{\Gamma(\mu - \alpha n - \beta m)}\,.
\end{flalign*} 
Therefore, under the constraints resulting from inequalities~\eqref{cond2}, Eq.~\eqref{mlrep5} can be transformed into
\begin{flalign} \label{mlrep6}
E_{\alpha,\beta}(x,y;\mu) &= \frac{1}{\alpha}\, \frac{ \re^{x^{1/\alpha}} %
x^{\frac{1 + \beta - \mu}{\alpha}} }{x^{\beta/\alpha} - y} + \frac{1}{\beta}\, \frac{\re^{y^{1/\beta}} %
y^{\frac{1 + \alpha - \mu}{\beta}} } {y^{\alpha/\beta} - x} + \lsum_{n=1}^{p_\beta} %
\lsum_{m=1}^{p_\alpha} \frac{x^{-n} y^{-m}}{\Gamma(\mu - \alpha n - \beta m)}\nonumber\\
&+ \frac{1}{2\pi\ri} \frac{1}{\alpha \beta} \lint_{\gamma(1;\tau_2)} \re^{ \z^{1/(\alpha \beta)} } %
\z^{ \frac{1 + \alpha + \beta - \mu}{\alpha \beta} - 1 }\; \frac{ x^{p_\beta} \z^{p_\alpha/\alpha} %
+ y^{p_\alpha} \z^{p_\beta/\beta} - \z^{ p_\alpha/\alpha + p_\beta/\beta } } %
{x^{p_\beta} y^{p_\alpha} (\z^{1/\beta} - x)(\z^{1/\alpha} - y) }\, \rd \z.
\end{flalign}
Next, simplifying the last term in Eq.~\eqref{mlrep6} yields
\begin{flalign} \label{simpmlrep6}
\frac{1}{2\pi\ri} \frac{1}{\alpha \beta}\, \lint_{\gamma(1;\tau_2)} \re^{ \z^{1/(\alpha \beta)} } %
& \z^{ \frac{1 + \alpha + \beta - \mu}{\alpha \beta} - 1 }\; \frac{ x^{p_\beta} \z^{p_\alpha/\alpha} %
+ y^{p_\alpha} \z^{p_\beta/\beta} - \z^{p_\alpha/\alpha + p_\beta/\beta} } %
{x^{p_\beta} y^{p_\alpha} (\z^{1/\beta} - x)(\z^{1/\alpha} - y) }\, \rd \z\nonumber\\
=& \frac{1}{2\pi\ri} \frac{1}{\alpha \beta}\, \lint_{\gamma(1;\tau_2)}\, %
\frac{ \re^{ \z^{1/(\alpha \beta)} } \z^{ \frac{ 1 + \alpha + \beta - \mu}{\alpha \beta} - 1 %
+ p_\alpha/\alpha } } {y^{p_\alpha} (\z^{1/\beta} - x)(\z^{1/\alpha} - y)}\, \rd \z\nonumber\\
+& \frac{1}{2\pi\ri}  \frac{1}{\alpha \beta} \lint_{\gamma(1;\tau_2)} %
\frac{ \re^{ \z^{1/(\alpha \beta)} } \z^{ \frac{ 1 + \alpha + \beta - \mu}{\alpha \beta} - 1 %
+ p_\beta/\beta } } {x^{p_beta} (\z^{1/\beta} - x)(\z^{1/\alpha} - y)}\, \rd \z\nonumber\\
-& \frac{1}{2\pi\ri} \frac{1}{\alpha \beta}\, \lint_{\gamma(1;\tau_2)}\, %
\frac{ \re^{ \z^{1/(\alpha \beta)} } \z^{ \frac{ 1 + \alpha + \beta - \mu}{\alpha \beta} - 1 + %
p_\alpha/\alpha + p_\beta/\beta } } {x^{p_\beta} y^{p_\alpha}(\z^{1/\beta} - x)(\z^{1/\alpha} - y)}\, %
\rd \z = I_1 + I_2 + I_3.
\end{flalign}
Assuming $|\arg x|\le \tau_1/\beta$\ and $|\arg y|\le \tau_1/\alpha$, each integral $I_1$, $I_2$ and $I_3$ in~\eqref{simpmlrep6} can be evaluated for large values of $|x|$\ and $|y|$, and provided that 
$|\arg x|\le \tau_1/\beta$ and $|x|$ is large enough, it can be checked that
\[
\min_{\z\in \gamma(1;\tau_2)} \l|\z^{1/\beta} - x\r| = |x| \sin(\tau_2/\beta - \tau_1/\beta) %
= |x| \sin\l(\tfrac{\tau_2 - \tau_1}{\beta}\r).\]
Similarly, when $|\arg y|\le \tau_1/\beta$ and $|y|$ is large enough,
\[
\min_{\z\in \gamma(1;\tau_2)} \l|\z^{1/\beta} - y\r| = |y| \sin(\tau_2/\alpha - \tau_1/\alpha) %
= |y| \sin\l(\tfrac{\tau_2 - \tau_1}{\alpha}\r).\]

Hence, for large $|x|$\ and $|y|$ with $|\arg x|\le \tau_1/\beta$\ and $|\arg y|\le \tau_1/\alpha$, we obtain an estimate of the integral $I_1$
\begin{equation} \label{estimateI1}
|I_1|\le \frac{ |x|^{-1} |y|^{-p_\alpha -1} }{ 2\pi \alpha \beta %
\sin\l(\frac{\tau_2 - \tau_1}{\alpha}\r) \sin\l(\frac{\tau_2 - \tau_1}{\beta}\r) }\, %
\lint_{\gamma(1;\tau_2)} \l|\re^{ \z^{\frac{1}{\alpha \beta}} }\r| %
\l|\z^{ \frac{ 1 + \alpha + \beta - \mu}{\alpha \beta} - 1 + \frac{p_\alpha}{\alpha} }\r|\, |\rd \z|.
\end{equation}
Besides, since the rays defined by $S_{\tau_2} = \bl\{\arg \z = \pm \tau_2,\ |\z|\le 1\br\}$ belong to the contour $\gamma(1;\tau_2)$, the integral in inequality~\eqref{estimateI1} is convergent. Wherefrom we get the equality
\[
\l|\re^{ \z^{1/(\alpha \beta)} }\r| = \exp\l(|\z|^{\frac{1}{\alpha \beta}} %
\cos \tfrac{\tau_2}{\alpha \beta} \r).\]
Now, according to inequalities~\eqref{cond2}, we have $\cos \frac{\tau_2}{\alpha \beta} < 0$. Thus,
\[
I_1 = o\l(|xy|^{-1} |y|^{-p_\alpha}\r)\ \qquad \tand \qquad I_2 = o\l(|xy|^{-1} |x|^{-p_\beta}\r).\]

Furthermore, referring to the last term in Eq.~\eqref{simpmlrep6} yields the following estimate of $I_3$,
\begin{equation} \label{estimateI3}
|I_3|\le \frac{ |x|^{-p_\beta - 1} |y|^{-p_\alpha -1} }{ 2\pi \alpha\beta %
\sin\l(\frac{\tau_2 - \tau_1}{\alpha}\r) \sin\l(\tfrac{\tau_2 - \tau_1}{\beta}\r) }\, %
\lint_{\gamma(1;\tau_2)} \l|\re^{ \z^{1/(\alpha \beta)} }\r| \l|\z^{ \frac{1 + \alpha + \beta - \mu} %
{\alpha \beta} - 1 + p_\alpha/\alpha + p_\beta/\beta }\r|\, |\rd \z|,
\end{equation}
which gives $I_3 = o\l(|xy|^{-1} |x|^{-p_\beta} |y|^{-p_\alpha}\r)$.

Hence, this leads finally to the overall asymptotic formula 
\[
I_1 + I_2 + I_3 = o\l(|xy|^{-1} |x|^{-p_\beta}\r) + o\l(|xy|^{-1} |y|^{-p_\alpha}\r)\]
and the proof of Eq.~\eqref{as1} (in case 1 of Theorem~\ref{as}) follows. 

Similarly, the proofs of Eqs.~\eqref{as2} (case 2), Eqs.~\eqref{as3} (case 3) and Eqs.~\eqref{as4} (case 4) run along the same lines as the above proof of Eq.~\eqref{as1} (case 1), and the proof of Theorem~\ref{as} is completed.
\end{proof}

\newpage
\addcontentsline{toc}{section}{References}
\section*{References}

\vskip -1cm
\def\refname{\empty}

\bibliographystyle{article}
\def\bibfmta#1#2#3#4{ {\sc #1}, {#2}, \emph{#3}, #4.}
\bibliographystyle{book}
\def\bibfmtb#1#2#3#4{ {\sc #1}, \emph{#2}, {#3}, #4.}

\vskip 1cm
\no\rule{\textwidth}{.5pt}
\vskip .3cm
\no {\small Christian {\sc Lavault}
\newline \emph{E-mail:} \href{mailto:lavault@lipn.univ-paris13.fr}{lavault@lipn.univ-paris13.fr}
\newline LIPN, UMR CNRS 7030 -- Laboratoire d'Informatique de Paris-Nord
\newline Universit\'e Paris 13, Sorbonne Paris Cit\'e, F-93430 Villetaneuse.
\newline \emph{URL:} \url{http://lipn.univ-paris13.fr/\textasciitilde lavault}
}

\end{document}